\DeclareSymbolFont{cyrletters}{OT2}{wncyr}{m}{n}
\DeclareMathSymbol{\Sha}{\mathalpha}{cyrletters}{"58}
\title{Curve equations from expansions of 1-forms at a nonrational point}
\author[R. van Bommel]{Raymond van Bommel}
\address{
  Department of Mathematics,
  Massachusetts Institute of Technology,
  Cambridge,
  MA 02139,
  USA
}
\address{School of Mathematics,
University of Bristol,
Fry Building,
Woodland Road,
Bristol,
BS8 1UG,
UK}
\email{\href{mailto:r.vanbommel@bristol.ac.uk}{r.vanbommel@bristol.ac.uk}}
\urladdr{\url{https://raymondvanbommel.nl/}}
\author[E. Costa]{Edgar Costa}
\address{
  Department of Mathematics,
  Massachusetts Institute of Technology,
  Cambridge,
  MA 02139,
  USA
}
\email{\href{mailto:edgarc@mit.edu}{edgarc@mit.edu}}
\urladdr{\url{https://edgarcosta.org}}
\author[B. Poonen]{Bjorn Poonen}
\address{
  Department of Mathematics,
  Massachusetts Institute of Technology,
  Cambridge,
  MA 02139,
  USA
}
\email{\href{mailto:poonen@math.mit.edu}{poonen@math.mit.edu}}
\urladdr{\url{https://math.mit.edu/~poonen/}}
\author[P.~Srinivasan]{Padmavathi Srinivasan}
\address{Boston University, 665 Commonwealth Avenue, Boston, MA 02215, USA}
\email{\href{mailto:padmask@bu.edu}{padmask@bu.edu}}
\urladdr{\url{https://padmask.github.io/}}
\date{June 10, 2025}
\newenvironment{enumalph}
{\begin{enumerate}}
{\end{enumerate}}
\newcommand{\defi}[1]{\textsf{#1}} 	
\newcommand{\C}{\mathbb{C}}
\newcommand{\PP}{\mathbb{P}}
\newcommand{\Q}{\mathbb{Q}}
\newcommand{\Z}{\mathbb{Z}}
\newcommand{\Qbar}{{\overline{\Q}}}
\newcommand{\calR}{\mathcal{R}}
\newcommand{\calS}{\mathcal{S}}
\newcommand{\calT}{\mathcal{T}}
\newcommand{\OO}{\mathscr{O}}
\DeclareMathOperator{\Div}{Div}
\DeclareMathOperator{\ord}{ord}
\DeclareMathOperator{\Proj}{Proj}
\DeclareMathOperator{\Sym}{Sym}
\DeclareMathOperator{\Tr}{Tr}
\newcommand{\GL}{\operatorname{GL}}
\newcommand{\HH}{{\operatorname{H}}}
\newcommand{\del}{\partial}
\newcommand{\intersect}{\cap} 
\newcommand{\isom}{\simeq}
\newcommand{\tensor}{\otimes} 
\newtheorem{theorem}{Theorem}[section]
\newtheorem{lemma}[theorem]{Lemma}
\newtheorem{corollary}[theorem]{Corollary}
\newtheorem*{proposition*}{Proposition*}
\theoremstyle{definition}
\theoremstyle{remark}
\newtheorem{remark}[theorem]{Remark}
\begin{document}

\begin{abstract}
We exhibit an algorithm to compute equations of an algebraic curve over a computable characteristic~0 field from the power series expansions of its regular 1-forms at a \emph{nonrational} point of the curve, extending a 2005 algorithm of 
Baker, Gonz\'{a}lez-Jim\'{e}nez, Gonz\'{a}lez, and Poonen for expansions at a rational point.
If the curve is hyperelliptic, the equations present it as an explicit double cover of a smooth plane conic, or as a double cover of the projective line when possible.
If the curve is nonhyperelliptic, the equations cut out the canonical model.
The algorithm has been used to compute equations over $\Q$ for many hyperelliptic modular curves without a rational cusp in the L-functions and Modular Forms Database.
\end{abstract}

\maketitle

\section{Introduction}

A curve $X$ is called \defi{nice} if it is smooth, projective, and geometrically integral.
From now on, $X$ is a nice curve of genus $g \ge 2$ over $\Q$,
but all our theorems and algorithms work over any ground field $F$ of characteristic~$0$ if field operations in $F$ are computable.
Our goal is to give an algorithm that takes as input the initial terms of the expansions of $1$-forms forming a basis of the $\Q$-vector space $\Gamma(X,\Omega^1_{X/\Q})$ 
at a \emph{nonrational} point and returns equations for $X$ over $\Q$;
such input arises naturally in \cite[Section~5]{zywina-openimage}, for instance.
(The analogue for expansions at a \emph{rational} point is covered in \cite[Section~2.1]{baker-gonzalez-jimenez-gonzalez-poonen-05}.)
These equations will cut out the canonical model if $X$ is nonhyperelliptic, and a double cover of a genus $0$ curve if $X$ is hyperelliptic; see \Cref{theorem:recovering} for more details.
Examples of the nonhyperelliptic case (the easier case) were worked out in 
\cite[Section~7]{baran} and \cite[Sections~7~and~8]{mercuri-schoof}.
So the main new work is in the hyperelliptic case.

The algorithm in \cite[Section~2.1]{baker-gonzalez-jimenez-gonzalez-poonen-05} will produce a model over the field of definition of the nonrational point, but there is no easy way to pass from that to the equation over $\Q$. 
Also, the presence of the rational point in \cite[Section~2.1]{baker-gonzalez-jimenez-gonzalez-poonen-05} meant that in the hyperelliptic case, the image of the canonical map was $\PP^1_\Q$, whereas in the present article, it could instead be a pointless genus~$0$ curve instead of $\PP^1_\Q$, in which case $X$ will need to be given as a double cover of a plane conic.
Moreover, there are additional complications in our article coming from the fact that even the expansion of objects defined over $\Q$ have coefficients in a larger number field.

The motivation for our algorithm is the problem of finding equations of modular curves \emph{that have no rational cusp}.
The algorithm has been used so far to calculate equations of over 4700 such hyperelliptic modular curves without a rational cusp for the L-Functions and Modular Forms Database \cite{lmfdb}, among which over 1500 are a double cover of a pointless genus 0 curve.

\section{Hyperelliptic curves}

The curve $X$ is called \defi{hyperelliptic} if the canonical map $X \to \PP^{g-1}$ is not a closed immersion.
Equivalently, $X$ is hyperelliptic if there exists a degree~$2$ morphism $\pi$ from $X$ to some genus~$0$ curve $C$.
Suppose that this is the case.
Then $C$ and the morphism $\pi$ are unique up to isomorphism. 
In fact, $C$ is the image of the canonical map.
The curve $C$ need not be isomorphic to $\PP^1$ over $\Q$, but the anticanonical map for $C$ identifies $C$ with a smooth plane conic in $\PP^2 = \Proj \Q[a,b,c]$.
For any $d \in \Z_{\ge 0}$, let $\Q[a,b,c]_d$ be the space of degree $d$ homogeneous polynomials in $\Q[a,b,c]$.

\section{Main theorem}

Now return to the general case, in which $X$ is any nice curve of genus $g \ge 2$ over $\Q$. 
Let $K \supseteq \Q$ be a finite extension.
Let $X_K = X \times_\Q K$.
Let $P \in X(K)$.
Assume that $K=\Q(P)$.
Let $q$ be a uniformizer of the completed local ring $\widehat{\OO}_{X_K,P}$.
Let $\omega_1,\ldots,\omega_g$ be a $\Q$-basis of $\HH^0(X,\Omega^1)$.
For each $i \in \{1,\ldots,g\}$, the Taylor expansion of $\omega_i$ at $P$ is $w_i \, dq$ for some $w_i \in K[[q]]$.
For $B \in \Z_{>0}$,
let $K[q]_{<B} \isom K[[q]]/(q^B)$ be the vector space of polynomials of degree $<B$.
Let $\bar{w}_i \colonequals (w_i \bmod q^B) \in K[q]_{<B}$.

\begin{theorem}\label{theorem:recovering}
Let $B=19g+48$. 
There exists an algorithm with 

Input: $g$, $K$, and polynomials $\bar{w}_1,\ldots,\bar{w}_g \in K[q]_{<B}$ arising from some nice curve $X$ over $\Q$ and $P \in X(K)$ as above.

Output: 
\begin{itemize}
\item If $X$ is nonhyperelliptic, return \texttt{nonhyperelliptic} and a finite list of homogeneous polynomials over $\Q$ cutting out a curve in $\PP^{g-1}$ linearly isomorphic over $\Q$ to the canonical model of $X$.
\item If $X$ is hyperelliptic and $g$ is even, return \texttt{hyperelliptic} and a separable polynomial $f \in \Q[x]$ of degree $2g+1$ or $2g+2$ such that $X$ is birational to the curve $y^2=f(x)$.
\item If $X$ is hyperelliptic and $g$ is odd, return \texttt{hyperelliptic} and homogeneous polynomials $Q \in \Q[a,b,c]_2$ and $H \in \Q[a,b,c]_{g+1}$ such that 
\[ 
    C \isom \Proj \frac{\Q[a,b,c]}{(Q)} \subset \PP^2 \quad\textup{ and }\quad X \isom \Proj \frac{\Q[a,b,c,y]}{(y^2-H,Q)} \subset \PP \left(1,1,1,\frac{g+1}{2} \right).
\] 
In this case, if a rational point on $C$ is given, find a model $y^2=f(x)$ as in the even genus hyperelliptic case.
\end{itemize}
\end{theorem}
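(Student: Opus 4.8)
The plan is to reconstruct the homogeneous ideal of relations among $\omega_1,\dots,\omega_g$ from the truncated expansions, and then, in the hyperelliptic case, to extract from it the conic $C$ and the branch datum presenting $X$ as a double cover. The engine throughout is the observation that a nonzero section of $(\Omega^1)^{\otimes d}$ on $X_K$ vanishes at the $K$-point $P$ to order at most $\deg(\Omega^1)^{\otimes d}=d(2g-2)$. Hence, for a homogeneous $F\in\Q[x_1,\dots,x_g]_d$, the section $F(\omega_1,\dots,\omega_g)$ vanishes identically as soon as its $q$-expansion $F(w_1,\dots,w_g)$ vanishes modulo $q^{d(2g-2)+1}$; and since $w_i\equiv\bar w_i\pmod{q^B}$ with the $\bar w_i$ of nonnegative order, $F(\bar w)$ agrees with $F(w)$ through order $B-1$, so this vanishing is visible from the truncations whenever $B>d(2g-2)$. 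Because the $\omega_i$ form a $\Q$-basis while the $\bar w_i$ have coefficients in $K$, I would search for relations defined over $\Q$ by $\Q$-linear algebra, regarding $K[q]_{<B}$ as a $\Q$-vector space and imposing the vanishing of every $K$-coordinate of $F(\bar w_1,\dots,\bar w_g)$; this is exactly the mechanism that accomplishes the descent to $\Q$ that is the main new feature compared to the rational-point case.

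First I would compute the degree $2$ and degree $3$ parts of this ideal. By Petri's theorem the canonical ideal is generated in degrees $\le 3$, so these parts already determine the canonical model, and I would read off hyperellipticity from the dimension of the space of quadrics: it equals $\binom{g-1}{2}$ in the hyperelliptic case (the image is the rational normal curve of degree $g-1$) and the strictly smaller $\binom{g-2}{2}$ otherwise. In the nonhyperelliptic case the computed generators cut out the image of $[\omega_1:\cdots:\omega_g]$, which is the canonical model read in the coordinates given by the chosen $\Q$-basis and hence is linearly isomorphic over $\Q$ to the canonical model; I would output these generators and stop. (The case $g=2$ is always hyperelliptic and I would treat it directly.)

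In the hyperelliptic case the canonical image is a genus $0$ curve $C$ embedded by a line bundle $M$ of degree $g-1$, with $\omega_X\isom\pi^*M$. When $g$ is odd, $M\isom N^{\otimes(g-1)/2}$ for the degree $2$ anticanonical bundle $N$ of $C$, and I would recover $N$ by extracting the $(g-1)/2$-th root of the rational normal curve: the quadratic relations among the $\omega_i$ exhibit $C$ as the $(g-1)/2$-uple re-embedding of a plane conic, and solving for the anticanonical coordinates $a,b,c$ and the conic relation $Q\in\Q[a,b,c]_2$ is again $\Q$-linear algebra in $\HH^0(X,(\Omega^1)^{\otimes 2})$. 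When $g$ is even, $g-1$ is odd, so $C$ carries a rational divisor class of odd degree and is therefore isomorphic to $\PP^1$ over $\Q$; I would produce a coordinate $x$ by finding a degree $1$ generator of the genus $0$ function field $\Q(C)=\Q(\{w_i/w_j\})\subset\Q(X)$. To recover the double cover I would form the function $h\colonequals (dx/dq)/w_1\in K((q))$, which equals $y$ up to a factor pulled back from $C$; then $h^2$ is a rational function of the recovered coordinates, and fitting it and clearing square factors yields either the separable $f\in\Q[x]$ of degree $2g+1$ or $2g+2$ with $X$ birational to $y^2=f(x)$ (even $g$, or odd $g$ once a supplied rational point parametrizes $C\isom\PP^1$) or the branch form $H\in\Q[a,b,c]_{g+1}$ with $y^2=H$ on the conic (odd $g$).

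The main obstacle is proving that $B=19g+48$ is large enough for every one of these linear systems to have the correct solution space, i.e.\ that truncated vanishing forces exact vanishing at each step. This is where the orders of vanishing and poles at $P$ must be controlled: each reconstructed object is a numerator of a section of some $(\Omega^1)^{\otimes d}$, or a product of such a section with functions of bounded order at $P$, and one must check that $B$ exceeds the degree of the relevant bundle plus the accumulated pole orders. The quadratic relations and the conic cost only $O(g)$ (bundles of degree $\le 2(2g-2)$); the dominant contribution comes from pinning down the branch datum $f$ or $H$, whose conic-degree is $g+1$, after first arranging that $x$ (equivalently the anticanonical coordinates) has pole order $O(1)$ at $P$ — for $x$ one places its single pole at $\pi(P)$, so the pole order is the ramification index, at most $2$ — so that the $g+1$ unknowns are separated in the $q$-expansion at only linear cost in $g$. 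Carrying out this bookkeeping carefully, over all steps and all positions of $P$ relative to the ramification of $\pi$, is what produces the explicit constant $19g+48$, and is the technical heart of the argument.
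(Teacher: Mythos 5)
Your overall strategy matches the paper's: reconstruct the homogeneous ideal of relations among the $\omega_i$ by $\Q$-linear algebra on the $K$-coefficients of truncated expansions (the vanishing criterion and the descent mechanism are exactly the paper's \Cref{lemma:vanishing} and \Cref{corollary:Idbasis}), test hyperellipticity via $\dim I_2$, use Petri in the nonhyperelliptic case, and present a hyperelliptic $X$ as a double cover of the conic via $y=df/\omega_1$, $h=y^2$. However, there is a genuine gap at the step you describe as ``extracting the $(g-1)/2$-th root of the rational normal curve \ldots\ by $\Q$-linear algebra in $\HH^0(X,(\Omega^1)^{\otimes 2})$.'' For odd $g>3$ the anticanonical coordinates $a,b,c$ are sections of $\pi^*\calN$ where $\calN$ is the degree-$2$ anticanonical bundle of $C$, while $\HH^0(X,(\Omega^1)^{\otimes 2})$ maps onto $\HH^0(C,\calN^{\otimes(g-1)})$; the quadrics $I_2$ give you the Veronese re-embedding, but producing a $\Q$-rational basis of the $3$-dimensional space $\HH^0(C,\calN)$ from that data is not plain linear algebra and is precisely the descent problem that is the main difficulty of the paper. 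The paper solves it by passing to $L\isom K$ where $C_L\isom\PP^1_L$ has a degree-$1$ coordinate $t=\omega_{g-1}'/\omega_g'$ (obtained by sorting a $K$-basis of expansions by order of vanishing), forming the meromorphic sections $\lambda_j t^i\frac{d}{dt}$ of $\calT_{C_L}$, and taking $L/\Q$-traces, computed as $(L\otimes K)/K$-traces of expansions, to land in $\HH^0(C,\calT)$ over $\Q$ (\Cref{lemma:findrat}, \Cref{lemma:anticanonical system on C}). Without this construction or a genuine substitute, your odd-genus branch does not get off the ground.

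Two further steps are asserted rather than proved. First, ``clearing square factors'' of $h$ to obtain the separable branch form $H$ does not work directly on the conic, since $\Q[a,b,c]/(Q)$ is not a UFD; the paper instead writes $h=F/G$ with $F,G\in\Q[a,b,c]_{g+3}$, projects $Z_C(FG)$ from a well-chosen point of $\PP^2(\Q)$ to $\PP^1$, factors there, pulls back the reduced and square parts, and realizes them as zero loci of forms via Riemann--Roch. Second, for even $g$ you assert a degree-$1$ generator of $\Q(C)$ exists (true, since the divisor cut by $\omega_1$ has odd degree $g-1$) but give no procedure to find one; the paper computes the $2$-dimensional space of forms of degree $g/2$ vanishing on that divisor by solving $\omega_1R_j=S\omega_j$ in expansions. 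Finally, the precision bookkeeping that justifies $B=19g+48$ is acknowledged but not carried out; since the theorem's content includes that this specific $B$ suffices, that analysis (the paper's \Cref{table:error}) cannot be omitted.
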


\begin{remark}
In the odd genus hyperelliptic case, we may require the quadratic form $Q$ to be \emph{diagonal}, if desired.
\end{remark}

\begin{remark}
By computing Hilbert symbols, one can determine whether a given smooth plane conic $C$ over $\Q$ is isomorphic to $\PP^1_\Q$; this is essentially due to Legendre.
(More generally, by the Hasse--Minkowski local--global principle for quadratic forms, this can be done over any number field; see, e.g., \cite[Theorem~26.3]{shimura}.
But it involves more than just field operations,
so it is not an algorithm that generalizes to \emph{any} characteristic $0$ field.)
\end{remark}


\section{Theoretical lemmas}

Before explaining the algorithm, we prove a few theoretical lemmas.
Let $S = \Q[x_1,\ldots,x_g]$ be the homogeneous coordinate ring of $\PP^{g-1}$ over $\Q$.
Let $I \subset S$ be the homogeneous ideal of the canonical image of $X$.
Let $I_d \subset S_d$ be the degree $d$ parts of $I \subset S$.

\begin{lemma} \label{lemma:vanishing}
Let $f \in S_d$.
If $f(w_1,\ldots,w_g) \in K[[q]]$ vanishes at $q=0$ to order $> d(2g-2)/[K:\Q]$, then the corresponding section of $(\Omega^1)^{\tensor d}$ is $0$.
\end{lemma}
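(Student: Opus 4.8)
The plan is to recognize $f(w_1,\ldots,w_g)$ as the local expansion of a genuine global section of the line bundle $(\Omega^1)^{\otimes d}$, and then to play the order of vanishing at $P$ off against the total degree of that line bundle, using the $[K:\Q]$ Galois conjugates of $P$ to amplify the contribution coming from $P$.

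First I would set $s \colonequals f(\omega_1,\ldots,\omega_g)$, the section of $(\Omega^1)^{\otimes d}$ obtained by substituting each $\omega_i$ for $x_i$ (each degree-$d$ monomial in the $\omega_i$ is a section of $(\Omega^1)^{\otimes d}$, so $s$ is well defined). Since $f$ has coefficients in $\Q$ and the $\omega_i$ form a $\Q$-basis of $\HH^0(X,\Omega^1)$, this $s$ is defined over $\Q$. The assertion to prove is that $s = 0$, so I would argue by contraposition: assume $s \neq 0$ and bound the order to which $f(w_1,\ldots,w_g)$ can vanish at $q = 0$.

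Next I would pass to $X_K$ and compute locally at $P$. Because $q$ is a uniformizer of $\widehat{\OO}_{X_K,P}$, the differential $dq$ generates $\Omega^1$ in a neighborhood of $P$, so from $\omega_i = w_i\,dq$ we get $s = f(w_1,\ldots,w_g)\,(dq)^{\otimes d}$ near $P$; hence $\ord_P(s) = \ord_{q=0} f(w_1,\ldots,w_g)$, a quantity I will call $n$. The key amplification step is the observation that, since $s$ is defined over $\Q$, its divisor of zeros $\divv(s)$ is $\Gal(\Qbar/\Q)$-stable, and since $K = \Q(P)$ the point $P$ has exactly $[K:\Q]$ distinct geometric conjugates, each occurring in $\divv(s)$ with the same multiplicity $n$. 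Therefore $n\,[K:\Q] \le \deg \divv(s)$.

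Finally I would invoke the standard fact that a nonzero global section of a line bundle $L$ on a nice curve has $\deg L$ zeros counted with multiplicity, together with $\deg (\Omega^1)^{\otimes d} = d(2g-2)$, to conclude $n\,[K:\Q] \le d(2g-2)$, i.e.\ $n \le d(2g-2)/[K:\Q]$. This contradicts the hypothesis $n > d(2g-2)/[K:\Q]$, forcing $s = 0$. The only genuinely nonformal point—and hence the step I expect to require the most care—is the Galois-conjugate count: one must check that $K = \Q(P)$ really yields $[K:\Q]$ distinct points lying above the closed point of $X$ under $P$ (automatic in characteristic~$0$ by separability) and that $\Q$-rationality of $s$ forces equal vanishing order at each conjugate. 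Everything else is the degree theory of line bundles on curves.
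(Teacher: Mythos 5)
Your proposal is correct and is exactly the paper's argument, just written out in full detail: the paper's one-line proof also counts the zeros of the $\Q$-rational section at $P$ and its $[K:\Q]$ Galois conjugates and compares against $\deg(\Omega^1)^{\tensor d}=d(2g-2)$. The points you flag as needing care (distinctness of the conjugates in characteristic $0$, equal vanishing order by $\Q$-rationality) are precisely what the paper compresses into the phrase ``at $P$ and its conjugates.''
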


\begin{proof}
The section has more than $d(2g-2) = \deg (\Omega^1)^{\tensor d}$ geometric zeros in total (at $P$ and its conjugates), so it is $0$.
\end{proof}

\begin{corollary}\label{corollary:Idbasis}
If $B > d(2g-2)/[K:\Q]$, then from the input as in Theorem~\ref{theorem:recovering} one can compute a basis for $I_d$.
\end{corollary}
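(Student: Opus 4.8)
The plan is to exhibit $I_d$ as the kernel of an explicit $\Q$-linear map between finite-dimensional $\Q$-vector spaces and then extract a basis by linear algebra over $\Q$.

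First I would record the identification of $I_d$ with the kernel of the evaluation map $\varepsilon\colon S_d \to \HH^0(X,(\Omega^1)^{\tensor d})$, $f \mapsto f(\omega_1,\ldots,\omega_g)$. Because the canonical map pulls $\OO(1)$ back to $\Omega^1$ and is dominant onto the canonical image, a degree-$d$ form lies in $I_d$ (i.e.\ vanishes on that image) precisely when $\varepsilon(f)$ is the zero section. This is the bridge between Lemma~\ref{lemma:vanishing}, which is phrased in terms of sections, and the object $I_d$ that we want to compute.

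Next I would introduce the $\Q$-linear map $\Phi\colon S_d \to K[q]_{<B}$, $f \mapsto f(\bar w_1,\ldots,\bar w_g) \bmod q^B$. This map is computable from the input alone: evaluating the monomials of degree $d$ on the truncations $\bar w_i$ and reducing modulo $q^B$ requires only additions and multiplications of truncated power series over the computable field $K$, and $f$ depends $\Q$-linearly on its coefficients. Fixing a $\Q$-basis of $K$ presents $K[q]_{<B}$ as a $\Q$-vector space of dimension $B[K:\Q]$ with an explicit basis, so $\Phi$ is recorded as an explicit matrix over $\Q$, whose kernel is computable by Gaussian elimination.

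It then remains to verify that $\ker\Phi = I_d$. If $f \in I_d$, then $\varepsilon(f)=0$, so $f(w_1,\ldots,w_g)=0$ in $K[[q]]$ and hence $\Phi(f)=0$. Conversely, if $\Phi(f)=0$, then $f(w_1,\ldots,w_g)$ vanishes at $q=0$ to order $\ge B > d(2g-2)/[K:\Q]$, so Lemma~\ref{lemma:vanishing} forces $\varepsilon(f)=0$, i.e.\ $f \in I_d$. The computation is routine; the only real content is this equality, which rests entirely on the strict degree bound supplied by Lemma~\ref{lemma:vanishing}, so I anticipate no genuine obstacle beyond the bookkeeping of the linear algebra over $K$ and $\Q$.
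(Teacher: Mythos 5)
Your proposal is correct and follows essentially the same route as the paper: both identify $I_d$ with the kernel of the $\Q$-linear map $S_d \to K[[q]]/(q^B)$ given by truncated expansion, and both justify the identification by invoking Lemma~\ref{lemma:vanishing} for the nontrivial inclusion. You simply spell out the intermediate identification of $I_d$ with the kernel of the section-evaluation map and the linear-algebra bookkeeping over $\Q$ and $K$, which the paper leaves implicit.
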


\begin{proof}
By \Cref{lemma:vanishing}, 
$I_d$ is the kernel of the $\Q$-linear map $S_d \to K[[q]]/(q^B)$ sending each monomial to its truncated expansion.
\end{proof}

\begin{lemma}\label{lemma:I2_hyp}
The dimension of $I_2$ is $\binom{g-1}{2}$ if $X$ is hyperelliptic, and $\binom{g-2}{2}$ if not.
\end{lemma}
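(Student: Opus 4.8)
The plan is to realize $I_2$ as the kernel of the symmetric-square multiplication map and then compute the rank of that map separately in the two cases. Write $V = \HH^0(X,\Omega^1)$, so that under the identification $x_i \leftrightarrow \omega_i$ we have $S_2 = \Sym^2 V$, and consider the multiplication map
\[
\mu \colon \Sym^2 V \To \HH^0(X,(\Omega^1)^{\tensor 2}), \qquad x_i x_j \mapsto \omega_i \omega_j .
\]
Since the canonical image is precisely the locus of points $[\omega_1(P):\cdots:\omega_g(P)]$, a quadric $f \in S_2$ vanishes on it if and only if the section $\mu(f)=f(\omega_1,\ldots,\omega_g)$ vanishes at every point of $X$, i.e. if and only if $\mu(f)=0$. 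Hence $I_2 = \ker \mu$, and by rank--nullity
\[
\dim I_2 = \dim \Sym^2 V - \rank \mu = \binom{g+1}{2} - \rank \mu .
\]
So the whole problem reduces to computing $\rank\mu$, that is, the dimension of the span of products of pairs of regular $1$-forms inside $\HH^0(X,(\Omega^1)^{\tensor 2})$.

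For the target space, Riemann--Roch gives $\dim \HH^0(X,(\Omega^1)^{\tensor 2}) = 3g-3$, since $(\Omega^1)^{\tensor 2}$ has degree $4g-4 > 2g-2$ and therefore no higher cohomology. In the nonhyperelliptic case I would invoke Max Noether's theorem, which asserts that the multiplication maps $\Sym^m V \to \HH^0((\Omega^1)^{\tensor m})$ are surjective for all $m \ge 1$; in particular $\mu$ is surjective, so $\rank \mu = 3g-3$ and
\[
\dim I_2 = \binom{g+1}{2} - (3g-3) = \frac{(g-2)(g-3)}{2} = \binom{g-2}{2}.
\]

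In the hyperelliptic case the products no longer fill up the target, because the canonical map factors through the degree~$2$ map $\pi\colon X \to C$ and the image is a rational normal curve of degree $g-1$ in $\PP^{g-1}$. Concretely, using an affine model $y^2 = f(x)$ with $\deg f \in \{2g+1,2g+2\}$ and the basis $\omega_i = x^{i-1}\,dx/y$, the products are $\omega_i\omega_j = x^{i+j-2}(dx/y)^2$, so $\im\mu$ is spanned by $\{x^k (dx/y)^2 : 0 \le k \le 2g-2\}$; these $2g-1$ sections are linearly independent (distinct powers of $x$ times a fixed nonzero element of $(\Omega^1_{K(X)})^{\tensor 2}$), so $\rank\mu = 2g-1$ and
\[
\dim I_2 = \binom{g+1}{2} - (2g-1) = \frac{(g-1)(g-2)}{2} = \binom{g-1}{2}.
\]

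The only genuinely nontrivial input is the surjectivity used in the nonhyperelliptic case, namely Max Noether's theorem on projective normality of the canonically embedded curve; by contrast the hyperelliptic count is elementary once the explicit basis above is in hand. The two points to take care over are the identification $I_2 = \ker\mu$ (requiring that a section vanishing on the Zariski-dense image must vanish identically) and, in the hyperelliptic case, checking that every $x^k(dx/y)^2$ with $0\le k\le 2g-2$ really does occur as some product $\omega_i\omega_j$, which follows because $k+2$ ranges over $\{2,\ldots,2g\}$ and hence is expressible as $i+j$ with $1\le i,j\le g$.
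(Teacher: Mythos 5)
Your proof is correct and follows essentially the same route as the paper: identify $I_2$ with the kernel of the multiplication map $\Sym^2 \HH^0(X,\Omega^1) \to \HH^0(X,(\Omega^1)^{\tensor 2})$, invoke Max Noether's theorem in the nonhyperelliptic case, and compute the rank explicitly via the basis $x^{i-1}\,dx/y$ in the hyperelliptic case. The only detail worth adding is that one should first base-change to $\C$ (as the paper does) to guarantee the existence of the affine model $y^2=f(x)$ and the stated basis, which is harmless since $\dim I_2$ is unchanged under field extension.
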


\begin{proof}
We may work over $\C$.
Let $\C[x]_{\le n}$ be the space of polynomials of degree at most $n$.
In the hyperelliptic case, $X$ is the smooth projective model of $y^2=F(x)$ with $\deg F = 2g+1$, and $H^0(X,\Omega^1) = \C[x]_{\le g-1} \frac{dx}{y}$ (see \cite[p.~11]{ACGH}, for example), so $I_2$ is isomorphic to the kernel of the surjective map $\ker(\Sym^2 \C[x]_{\le g-1} \to \C[x]_{\le 2g-2})$, so $\dim I_2 = g(g+1)/2 - (2g-1) = \binom{g-1}{2}$.
In the nonhyperelliptic case, this follows from Max Noether's theorem \cite[p.~117]{ACGH}.
\end{proof}

\begin{lemma}
\label{lemma:findrat}
Let $X$ be a hyperelliptic curve over $\Q$.
Let $L$ be a finite extension of $\Q$.
Let $P' \in X(L)$.
Suppose that $\omega_1',\ldots,\omega_g'$ is an $L$-basis for $\HH^0(X_L,\Omega^1)$ such that $\ord_{P'}(\omega_1') < \ldots < \ord_{P'}(\omega_g')$.
Let $t = \omega_{g-1}'/\omega_g' \in L(X)$.
Then $t \in L(C)$ and is of degree $1$ $($as a rational function on $C_L$$)$.
\end{lemma}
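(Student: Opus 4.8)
The plan is to reduce to an explicit coordinate computation on $C$ after two structural observations. First, I would invoke the hyperelliptic involution $\iota$ of $X$, which is canonical and hence defined over $\Q$, and which acts as $-1$ on $\HH^0(X,\Omega^1)$; thus every ratio $\omega_i'/\omega_j'$ of global $1$-forms is $\iota$-invariant and therefore descends to a rational function on $C = X/\iota$. This immediately gives $t = \omega_{g-1}'/\omega_g' \in L(C)$. Second, since $Q' \colonequals \pi(P') \in C(L)$ and $C_L$ is a smooth genus-$0$ curve with an $L$-point, we have $C_L \isom \PP^1_L$, so I may choose a coordinate $x$ on $C_L$ with $x(Q')=0$ and present $\pi$ as the $x$-coordinate map of a model $y^2=F(x)$ for $X_L$.

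In these coordinates $\HH^0(X_L,\Omega^1)$ has basis $x^i\,\frac{dx}{y}$ for $0 \le i \le g-1$, so each $\omega_j' = s_j(x)\,\frac{dx}{y} = (\pi^* s_j)\cdot\eta_0$ with $\eta_0 = dx/y$ and $s_j \in L[x]$ of degree $\le g-1$; hence $t = s_{g-1}/s_g$, a ratio of sections of $\calL \isom \calO_{\PP^1}(g-1)$. The next step is order bookkeeping. The divisor of $\eta_0$ is supported at the points above $x=\infty$, so $\ord_{P'}(\eta_0)=0$, and therefore $\ord_{P'}(\omega_j') = e\cdot \operatorname{mult}_0(s_j)$, where $e \in \{1,2\}$ is the ramification index of $\pi$ at $P'$ (this uses only that $\eta_0$ is a unit at $P'$ and that pullback of functions multiplies orders by $e$).

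Now comes the key combinatorial point. Because $\deg s_j \le g-1$, each $\operatorname{mult}_0(s_j)$ is an integer in $\{0,1,\dots,g-1\}$; the hypothesis that the $g$ quantities $\ord_{P'}(\omega_j') = e\cdot\operatorname{mult}_0(s_j)$ are strictly increasing forces the $g$ values $\operatorname{mult}_0(s_j)$ to be strictly increasing inside a set of exactly $g$ integers, so $\operatorname{mult}_0(s_j) = j-1$ for all $j$. In particular $s_g$ vanishes to order $g-1$ at $x=0$ while having degree $\le g-1$, so $s_g = \lambda x^{g-1}$; and $s_{g-1}$ vanishes to order exactly $g-2$, so $s_{g-1} = x^{g-2}(\mu+\nu x)$ with $\mu\ne 0$. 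Cancelling the common factor $x^{g-2}$ yields $t = (\mu+\nu x)/(\lambda x)$, a M\"obius function of $x$, which has degree $1$ on $C_L \isom \PP^1_L$.

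I expect the only real friction to be the order-bookkeeping step: one must confirm that the anchor differential $\eta_0$ neither vanishes nor has a pole at $P'$ (so that the orders of the $\omega_j'$ track the vanishing orders of the $s_j$ exactly, up to the uniform factor $e$), and treat the ramified and unramified cases together. Once $\ord_{P'}(\omega_j') = e\cdot\operatorname{mult}_0(s_j)$ is established, the pigeonhole identification $\operatorname{mult}_0(s_j)=j-1$ and the degree-$1$ conclusion are immediate. A minor point worth recording is that the coordinate choice is harmless precisely because $Q'$ is $L$-rational, which is what guarantees $C_L \isom \PP^1_L$; the intrinsic statement $t \in L(C)$ obtained from the involution does not depend on this choice.
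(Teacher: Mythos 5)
Your proof is correct and follows essentially the same route as the paper: pass to a hyperelliptic model $y^2=F(x)$, write each $\omega_j'$ as $s_j(x)\,dx/y$, use the strictly increasing orders together with a pigeonhole on $\{0,\dots,g-1\}$ to pin down the $s_j$, and conclude that $t$ is a degree-$1$ function of $x$. The only (harmless) difference is that you normalize $P'$ to lie over $x=0$ and track vanishing multiplicities there, whereas the paper places $P'$ at infinity so that the bookkeeping is by degree and $t$ comes out as a degree-$1$ polynomial in $x$.
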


\begin{proof}
We may assume that $X_L$ is the smooth projective model of $y^2=F(x)$ for some $F \in L[x]$, and $P'$ is at infinity.
Then for $i=0,\ldots,g-1$, we have $\omega_{g-i}' = J_i(x) \, dx/y$ for some $J_i(x) \in L[x]$ of degree exactly $i$.
Then $t$ is a degree~$1$ polynomial in $L[x]$.
\end{proof}

\begin{lemma}
\label{lemma:anticanonical system on C}
Let $C$ be a genus~$0$ curve over $\Q$.
Let $\calT$ be the tangent bundle of $C$.
Let $V \colonequals \HH^0(C,\calT)$.
Let $L$ be a finite extension of $\Q$, with $\Q$-basis $\lambda_1,\ldots,\lambda_\ell$.
Let $t$ be a degree~$1$ rational function on $C_L$.
\begin{enumalph}
\item The meromorphic sections
$\frac{d}{dt}, t \frac{d}{dt}, t^2 \frac{d}{dt}$ of $\calT$ form an $L$-basis of $V_L$.
\item\label{brat}\label{spanningtraces} The elements $\Tr_{L/\Q}(\lambda_j t^i \frac{d}{dt})$ for $0 \le i \le 2$ and $1 \le j \le \ell$ span $V$.
\end{enumalph}
\end{lemma}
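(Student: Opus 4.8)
The plan is to handle the two parts in turn, reducing everything to the model case $C_L \isom \PP^1_L$ supplied by the degree~$1$ function $t$.

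For part (a), first recall that since $C$ has genus~$0$, the tangent bundle $\calT = (\Omega^1_C)^\vee$ has degree~$2$, so Riemann--Roch gives $\dim_\Q V = 3$. A degree~$1$ rational function on the genus~$0$ curve $C_L$ is an isomorphism $t \colon C_L \isomto \PP^1_L$, under which $\calT$ is identified with the tangent bundle of $\PP^1_L$ and $V_L = \HH^0(C_L,\calT)$ with the space of global vector fields on $\PP^1_L$. A global vector field is of the form $f(t)\,\frac{d}{dt}$ on the affine chart with coordinate $t$; passing to the chart $s = 1/t$ at infinity via $\frac{d}{dt} = -s^2\frac{d}{ds}$ shows that it extends regularly across $t=\infty$ if and only if $\deg f \le 2$. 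Hence $\frac{d}{dt}, t\frac{d}{dt}, t^2\frac{d}{dt}$ form an $L$-basis of $V_L$, which also confirms $\dim_\Q V = 3$.

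For part (b), I would use the trace map. By flat base change, $V_L = V \tensor_\Q L$, and the $\Q$-linear trace $\Tr_{L/\Q}\colon L \to \Q$ induces a $\Q$-linear map $\operatorname{id}_V \tensor \Tr_{L/\Q}\colon V_L \to V$, which is the meaning of $\Tr_{L/\Q}$ applied to a section. This map is surjective: since we are in characteristic~$0$, $\Tr_{L/\Q}$ is nonzero, so there is $c \in L$ with $\Tr_{L/\Q}(c)=1$, and then $v \tensor c \mapsto v$ for every $v \in V$. By part (a) the $t^i\frac{d}{dt}$ with $0 \le i \le 2$ form an $L$-basis of $V_L$, so together with the $\Q$-basis $\lambda_1,\ldots,\lambda_\ell$ of $L$ the products $\lambda_j t^i\frac{d}{dt}$ form a $\Q$-basis of $V_L$. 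The image of a spanning set under a surjective linear map spans the target, so the $\Tr_{L/\Q}(\lambda_j t^i\frac{d}{dt})$ span $V$.

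The content is genuinely elementary once the isomorphism $t\colon C_L \isomto \PP^1_L$ is in hand, so the only point needing care---the closest thing to an obstacle---is making the trace map on global sections precise and checking that it agrees with the base-change identification $V_L = V \tensor_\Q L$; surjectivity of $\Tr_{L/\Q}$ then does all the real work. One could alternatively phrase part (b) via Galois descent, summing over the embeddings $L \injects \overline{\Q}$, but the tensor-product description keeps the argument valid over an arbitrary characteristic~$0$ base field.
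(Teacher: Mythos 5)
Your proposal is correct and follows essentially the same route as the paper: part (a) by reducing to $C_L \isom \PP^1_L$ with $t$ as coordinate and checking regularity of $t^i\frac{d}{dt}$ at infinity, and part (b) by observing that the trace map $V_L \to V$ is surjective and sends the $\Q$-spanning set $\{\lambda_j t^i \frac{d}{dt}\}$ of $V_L$ onto a spanning set of $V$. You simply spell out the details (the chart computation at $\infty$ and the identification $V_L = V \tensor_\Q L$) that the paper leaves implicit.
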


\begin{proof}\hfill
\begin{enumalph}
\item
Without loss of generality, $C_L=\PP^1$ and $t$ is the standard coordinate.
Then $\calT \isom \OO(2)$, so $\dim V_L=3$.
Also, $\frac{d}{dt}$ has a double zero at $\infty$, so $\frac{d}{dt}, t \frac{d}{dt}, t^2 \frac{d}{dt}$ are independent global sections.
\item
The map $\Tr_{L/\Q} \colon V_L \to V$ is surjective.\qedhere
\end{enumalph}
\end{proof}

\begin{lemma}
\label{lemma:expressrat}
Let $C$ be a smooth plane conic in $\PP^2$ over a field $k$.
Let $h \in k(C)$ be a rational function of degree $d$.
Then $h$ is given by a ratio of two homogeneous forms on $\PP^2$ of degree $\lceil d/2 \rceil$.
\end{lemma}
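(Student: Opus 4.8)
The plan is to produce $h$ as a ratio $G/H$ of two degree-$e$ forms, where $e \colonequals \lceil d/2 \rceil$, by first choosing the denominator $H$ to cancel the poles of $h$ and then setting $G \colonequals hH$. First I would record the relevant numerics. Since $C$ is a conic, a line meets it in a divisor of degree $2$, so $\deg \OO_C(1) = 2$ and $\deg \OO_C(e) = 2e \ge d$. From the exact sequence
\[
0 \to \OO_{\PP^2}(e-2) \xrightarrow{\,Q\,} \OO_{\PP^2}(e) \to \OO_C(e) \to 0
\]
together with $\HH^1(\PP^2, \OO_{\PP^2}(e-2)) = 0$, the restriction map $k[a,b,c]_e \to \HH^0(C, \OO_C(e))$ is surjective, with image of dimension $\binom{e+2}{2} - \binom{e}{2} = 2e+1$. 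Hence every section of $\OO_C(e)$ is the restriction of a degree-$e$ form, and it suffices to find $G, H \in \HH^0(C, \OO_C(e))$ with $h = G/H$.

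The key step is the choice of $H$. The pole divisor $(h)_\infty$ is an effective, $k$-rational (Galois-stable) divisor on $C$ of degree $d$, so imposing that a section of $\OO_C(e)$ vanish along $(h)_\infty$ amounts to at most $d$ linear conditions over $k$. Since $\dim_k \HH^0(C, \OO_C(e)) = 2e+1 \ge d+1$, these conditions admit a nonzero solution $H$, and crucially this solution can be taken over $k$ itself: the space of sections and the vanishing conditions are all defined over $k$, so the solution space is a $k$-subspace that is nonzero as soon as it is nonzero after base change to $\kbar$, which the dimension count guarantees. By construction $\divv_C(H) \ge (h)_\infty$.

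It then remains to verify that $G \colonequals hH$ is regular and to descend to $\PP^2$. On $C$ we have $\divv_C(G) = \divv_C(H) + (h)_0 - (h)_\infty \ge 0$, using $\divv_C(H) \ge (h)_\infty$, so $G$ extends to a global section of $\OO_C(e)$ defined over $k$. Lifting $G$ and $H$ through the surjection above yields degree-$e$ forms on $\PP^2$ whose ratio restricts to $h$, as desired. The main obstacle is not any single computation but the bookkeeping that makes $e = \lceil d/2 \rceil$ exactly large enough ($2e \ge d$) while keeping everything defined over the possibly pointless ground field $k$; the degenerate constant case ($d=0$, $e=0$) is handled trivially by taking $H$ a nonzero constant.
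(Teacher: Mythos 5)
Your proposal is correct and follows essentially the same route as the paper: choose a denominator section of $\OO_C(\lceil d/2\rceil)$ vanishing along $(h)_\infty$ (the paper gets its existence directly from Riemann--Roch, you from the equivalent dimension count $2e+1 > d$), set the numerator equal to $h$ times it, and lift both to degree-$\lceil d/2\rceil$ forms on $\PP^2$. Your explicit verification that the restriction map $k[a,b,c]_e \to \HH^0(C,\OO_C(e))$ is surjective, and the remark about everything being defined over $k$, are details the paper leaves implicit but add nothing essentially new.
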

\begin{proof}
Let $L \in \Div C$ be a hyperplane section of $C \subset \PP^2$.  Write $(h) = (h)_0 - (h)_\infty$, where $(h)_0$ and $(h)_\infty$ are effective and of degree $d$.
Then $\lceil d/2 \rceil L - (h)_\infty$ is of degree $\ge 0$, so by Riemann--Roch there exists a section $s$ of $\OO_C(\lceil d/2 \rceil)$ vanishing at the poles of $h$. 
Then $hs$ is another global section of $\OO_C(\lceil d/2 \rceil)$.
Both $s$ and $hs$ are restrictions of homogeneous forms on $\PP^2$, and $h$ is their ratio.
\end{proof}

\begin{lemma}
\label{L:pullback of sections}
Let $\pi \colon X \to Y$ be a morphism of nice curves over $\C$.
Let $P \in X(\C)$.
Let $Q=\pi(P)$.
Let $e$ be the ramification index of $\pi$ at $P$.
Let $s$ be a nonzero meromorphic section of $(\Omega^1_Y)^{\tensor n}$ for some $n \in \Z$.
Then $\ord_P (\pi^*s) = e \ord_Q s + n(e-1)$.
\end{lemma}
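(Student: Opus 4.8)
The plan is to reduce everything to a purely local computation at $P$ in terms of chosen uniformizers, since both $\ord_P$ and $\ord_Q$ are defined via the local rings. First I would pick a uniformizer $u$ of $\OO_{Y,Q}$ and a uniformizer $t$ of $\OO_{X,P}$. Because $t$ is a uniformizer at a smooth point, $dt$ generates $\Omega^1_X$ near $P$, so $(dt)^{\tensor n}$ generates $(\Omega^1_X)^{\tensor n}$ there, and likewise $(du)^{\tensor n}$ generates $(\Omega^1_Y)^{\tensor n}$ near $Q$. Hence I can write $s = f\,(du)^{\tensor n}$ for a meromorphic function $f$ with $\ord_Q f = \ord_Q s$; setting $m \colonequals \ord_Q s$, we have $f = u^m v$ with $v$ a unit in $\OO_{Y,Q}$.

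The key step is to compute $\pi^*(du)$. By the definition of the ramification index, $\pi^* u = t^e w$ for some unit $w \in \OO_{X,P}$. Differentiating gives
\[
  \pi^*(du) = d(t^e w) = \bigl(e\,t^{e-1} w + t^e w'\bigr)\,dt = t^{e-1}\bigl(e w + t\,w'\bigr)\,dt .
\]
Here is where characteristic~$0$ enters: since $w$ is a unit and $e \ne 0$ in $\C$, the factor $e w + t\,w'$ takes the nonzero value $e\,w(P)$ at $P$ (as $t\,w'$ vanishes there) and is therefore again a unit. Writing $w_1 \colonequals e w + t\,w'$, we get $\pi^*(du) = t^{e-1}\,w_1\,dt$ with $w_1$ a unit, and raising to the $n$-th tensor power yields $\pi^*\bigl((du)^{\tensor n}\bigr) = t^{n(e-1)}\,w_1^{\,n}\,(dt)^{\tensor n}$.

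Combining these, and using that $\pi(P)=Q$ forces $\pi^*(v)$ to be a unit at $P$,
\[
  \pi^* s = \pi^*(u^m)\,\pi^*(v)\,\pi^*\bigl((du)^{\tensor n}\bigr) = t^{em}\,w^m\,\pi^*(v)\,t^{n(e-1)}\,w_1^{\,n}\,(dt)^{\tensor n},
\]
so the coefficient of $(dt)^{\tensor n}$ is a unit times $t^{em + n(e-1)}$. Reading off the order against the generator $(dt)^{\tensor n}$ gives
\[
  \ord_P(\pi^* s) = em + n(e-1) = e\,\ord_Q s + n(e-1),
\]
as claimed. Note the computation is uniform in $n \in \Z$: since $t^{e-1} w_1$ is invertible in the fraction field, the same identity for $\pi^*\bigl((du)^{\tensor n}\bigr)$ holds for negative $n$ as well, so no separate case is needed.

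I expect the only genuine subtlety to be the computation of $\pi^*(du)$ together with the verification that $e w + t\,w'$ is a unit; this is precisely the point at which the hypothesis that we work over $\C$ (characteristic~$0$) is essential, guaranteeing that $e$ is invertible. Everything else is routine bookkeeping, amounting to collecting powers of $t$ and invoking the facts that pullbacks of units are units and that products of units are units.
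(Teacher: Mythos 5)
Your proof is correct and follows essentially the same route as the paper: decompose $s$ as a meromorphic function times $(du)^{\tensor n}$, use $\ord_P(\pi^*f)=e\ord_Q(f)$, and use $\ord_P(\pi^*(du))=e-1$. The only difference is that the paper cites this last fact as ``in the proof of the Hurwitz formula'' while you carry out the local computation $\pi^*(du)=t^{e-1}(ew+tw')\,dt$ explicitly, correctly noting where characteristic $0$ is used.
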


\begin{proof}
Let $t$ be a uniformizer at $\pi(P)$ on $Y$.
For any $f \in \Q(Y)^\times$, we have $\ord_P(\pi^* f) = e \ord_Q(f)$ by definition, and $\ord_P(\pi^* dt) = e-1$ as in the proof of the Hurwitz formula.
Since $s = f \, dt^{\tensor n}$ for some $f \in \Q(Y)^\times$, the formula follows.
\end{proof}

\section{Proof of main theorem ignoring precision}
\label{S:proof of main theorem}

We now start the proof of \Cref{theorem:recovering}.
Compute a basis for $I_2$ using \Cref{corollary:Idbasis} and apply \Cref{lemma:I2_hyp} to test if $X$ is hyperelliptic.
If $X$ is nonhyperelliptic, compute bases for $I_2,I_3,I_4$ using \Cref{corollary:Idbasis};
these are enough to cut out $X \subset \PP^{g-1}$, by Petri's theorem \cite{petri}.
\emph{Henceforth, we assume that $X$ is hyperelliptic.}

Steps~\eqref{Step:ConvenientBasis}--\eqref{Step:MakeGlobalSections} below require working over a field $L$ such that $X$ has an $L$-point $P'$, so that there is an isomorphism $C_L \isom \PP^1_L$ such that $P'$ maps to $\infty$.
We choose $L$ to be an isomorphic copy of $K$, and let $P' \in X(L)$ be the result of applying the isomorphism to $P \in X(K)$.
(We will need to consider $(L \tensor K)/K$-traces, so keeping separate names for $L$ and $K$ will help clarify things.)
We will need to take $L/\Q$-traces of elements of $\HH^0(C_L,\calT_L)$ as in \Cref{lemma:anticanonical system on C} to get elements of $\HH^0(C,\calT)$; 
these will be computed as $(L \tensor K)/K$-traces of their expansions at $P$ (the tensor product is over $\Q$).

We first explain the algorithm as if we had $w_1,\ldots,w_g \in K[[q]]$ to infinite precision, and later in \Cref{S:Precision} explain what modifications are needed when we have only their truncations $\bar{w}_1,\ldots,\bar{w}_g$.

\begin{enumerate}
\item\label{Step:ConvenientBasis} 
(Find the expansion of a rational function $t \colon X_L \to C_L \isom \PP^1_L$.)
Let $W \subset K[[q]]$ be the $\Q$-span of $w_1,\ldots,w_g$,
and let $W_K \subset K[[q]]$ be their $K$-span.
Run Gaussian elimination over $K$ to find a new $K$-basis $w_1',\ldots,w_g'$ of $W_K$ such that $\ord_P(w_1')<\cdots<\ord_P(w_g')$.
Let $M \in \GL_g(K)$ be the change-of-basis matrix sending $w_1,\ldots,w_g$ to $w_1',\ldots,w_g'$.
Applying the isomorphism $K \to L$ yields a matrix $M_L \in \GL_g(L)$.
Then $M_L$ sends $\omega_1,\ldots,\omega_g$ to an $L$-basis $\omega_1',\ldots,\omega_g'$ of $\HH^0(X_L,\Omega^1)$ as in \Cref{lemma:findrat}.
Computing the same $L$-linear combinations of $w_1,\ldots,w_g \in K[[q]]$ produces elements $w_1'',\ldots,w_g'' \in L \tensor W \subset (L \tensor K)[[q]]$ representing the expansions at $P$ of the $\omega_i'$, which have increasing order of vanishing at $P'$.

Let $t = \omega_{g-1}'/\omega_g' \in L(X)$, as in \Cref{lemma:findrat},
so $t$ is the ``$x$-coordinate'' on a hyperelliptic model.
Its expansion at $P$ is in $(L \tensor K)((q))$.

\item\label{Step:MakeGlobalSections} 
(Find expansions of a $\Q$-basis of $\HH^0(C,\calT)$.)
Let $\lambda_1,\ldots,\lambda_\ell$ be a $\Q$-basis of $L$.
The $L/\Q$-traces in \Cref{lemma:anticanonical system on C}\eqref{brat} span $V \colonequals \HH^0(C,\calT)$, so three of them form a $\Q$-basis of $C$.
To calculate with them, we start with the expansions of $\lambda_j t^i \frac{d}{dt}$ in $(L \tensor K)((q))$ for $i = 0,1,2$ and $j=1,\ldots,\ell$,
calculate $(L\tensor K)/K$-traces (traces are compatible with base change),
and find three of them that are $K$-linearly independent and hence $\Q$-linearly dependent;
call them $\del_0,\del_1,\del_2 \in K((q)) \frac{d}{dq}$;
these are the expansions of a basis of global sections of $\calT$ pulled back to $X$.

\item\label{Step:FindingQ}
(Find the equation $Q=0$ of the conic $C$.) 
There is a unique $Q \in \Q[a,b,c]_2$ up to scalar
such that $Q(\del_0,\del_1,\del_2)=0$ in $K((q)) \left( \frac{d}{dq} \right)^2$.
Then $Q=0$ is the anticanonical model of $C$ in $\PP^2$.
We find $Q$ by linear algebra.

\item\label{Step:SquareRoot}
(Find the expansion of $h \in \Q(C)$ such that $\Q(X) = \Q(C)(\sqrt{h})$.)
In this step, we compute $h \in \Q(C)$ such that $\Q(X) = \Q(C)(\sqrt{h})$.
Let $f \colonequals a/b$, viewed as a rational function on $C$; its expansion is $\del_0/\del_1$. 
Let $y=df/\omega_1 \in \Q(x)$ and $h=y^2$.
The hyperelliptic involution fixes $df$ and acts as $-1$ on $H^0(X, \Omega^1)$, so it negates $y$ and fixes $h$; that is, $h \in \Q(C)$.
Then $\Q(X) = \Q(C)(y) = \Q(C)(\sqrt{h})$.

\item\label{Step:Writinghasratio}(Write $h$ as a ratio of homogeneous forms.)
We now show how to write $h$ explicitly as $F/G$ for some $F,G \in \Q[a,b,c]_{g+3}$. Since $f$ is a rational function of degree $2$ on $C$, it has at most $2$ poles with multiplicity, so $df$ on $C$ has at most $4$ poles with multiplicity (the worst case being when $f$ has two simple poles), so its pullback to $X$ has at most $8$ poles. 
On the other hand, $\omega_1$ has at most $2g-2$ zeros on $X$, so $y$ has at most $2g+6$ poles on $X$.
Then $h$ has at most $2(2g+6)$ poles on $X$, so its degree on $C$ is at most $2g+6$. 
By \Cref{lemma:expressrat}, there exist homogeneous forms $F,G \in \Q[a,b,c]_{g+3}$ such that $F/G = h$. 
To find the coefficients of possible $F$ and $G$, we solve the linear system $F = h G$ in these unknown coefficients, using expansions of $\del_0,\del_1,\del_2$ and $h$.

\item\label{Step:EvenGenus} 
(For even $g$, find an equation $y^2=f(x)$ for $X$.)
Suppose that $g$ is even.
In this case, $C \isom \PP^1$, and we will describe a method to find a rational parametrization of $C$, following the strategy of \Cref{lemma:expressrat}.
The $1$-form $\omega_1$ corresponds to a linear form on $\PP^{g-1}$, which cuts out a divisor $D$ of odd degree $g-1$ on $C$.
Let $\calS$ be the space of $S \in \Q[a,b,c]_{g/2}$ that vanish along $D$.
By the Riemann--Roch theorem, $\dim \calS=2$;
we next seek an explicit basis of $\calS$, which will define an isomorphism $C \to \PP^1$.
For each $S \in \calS$ and for $j=2,\ldots,g$, the element $R_j \colonequals S \omega_j/\omega_1 \in \Q(a,b,c)$ lies in $\Q[a,b,c]_{g/2}$ since $S$ vanishes along $D$.
Thus $\calS$ is the projection on the last coordinate of the space $\calR$ of $g$-tuples $(R_2,\ldots,R_g,S)$ of polynomials in $\Q[a,b,c]_{g/2}$ such that $$\omega_1 R_j = S \omega_j$$ for all $j=2,\ldots,g$.
Using the expansions of $a,b,c,\omega_1,\ldots,\omega_g$ at $P$, we compute $\calR$ by linear algebra.
Thus we obtain an isomorphism $C \isom \PP^1$.

Under $C \isom \PP^1$, the function $h$ corresponds to some $f \in \Q(x)^\times$.
Now $X$ is birational to the curve $y^2=f(x)$.
Multiply $f$ by a square to make it a polynomial.
Remove square factors (by computing $\gcd(f,f')$, etc.) to make $f$ separable.
By Riemann--Hurwitz, $\deg f$ is $2g+1$ or $2g+2$.

\item \label{Step:FindH}
(For odd $g$, find $H$.)
Now assume that $g$ is odd.
Let $F,G$ be as in Step~\ref{Step:Writinghasratio}.  
We seek $H \in \Q[a,b,c]_{g+1}$ separable and $J \in \Q[a,b,c]_{(g+5)/2}$ such that $FG \equiv H J^2 \pmod{Q}$; 
then the rational function $h=F/G$ equals $HJ^2/G^2$ on $C \colon Q=0$, so the function field of the smooth projective curve $X' \colonequals \Proj \frac{\Q[a,b,c,y]}{(y^2-H,Q)}$ equals $\Q(C)(\sqrt{HJ^2/G^2}) = \Q(C)(\sqrt{h})$, so $X' \isom X$; that is, $H$ is as in the statement of the theorem.
We cannot simply factor $FG$ to find $H$ and $J$, since $\Q[a,b,c]/(Q)$ is not a UFD.
Instead we will decompose the zero locus $D \colonequals Z_C(FG) \in \Div C$ as $U+2V$ with $U,V$ effective divisors on $C$ and $U$ reduced.
First, choose $p \in \PP^2(\Q)$ not on any line connecting geometric points in $D$ and not on any line tangent to a geometric point in $D$; then the projection from $p$ restricts to a morphism $\nu \colon C \to \PP^1$ that is injective on the geometric points in $D$ and unramified at those points.
Write $\nu_* D = U' + 2 V'$ with $U',V'$ effective divisors on $\PP^1$ and $U'$ reduced, using factorization in the homogeneous coordinate ring of $\PP^1$.
Let $U = \nu^* U' \intersect D$ and $V = \nu^* V' \intersect D$;
then $D = U + 2V$ by choice of $\nu$.
We have $\deg U = \deg U' = 2g+2$, so $\deg V = \deg V' = g+5$.
By Riemann--Roch on $C$, an effective divisor of even degree $2d$ is the zero locus of a form in $\Q[a,b,c]_d$, unique up to scalar and modulo multiples of $Q$; in particular, there exist $H \in \Q[a,b,c]_{g+1}$ and $J \in \Q[a,b,c]_{(g+5)/2}$ with $Z_C(H)=U$ and $Z_C(J)=V$; we find explicit $H$ and $J$ by linear algebra.
Then $FG \equiv \alpha H J^2 \pmod{Q}$ for some $\alpha \in \Q^\times$.
Evaluate $F,G,H,J$ at some zero of $Q$ in $\Qbar^3$ to find $\alpha$, and replace $H$ by $\alpha H$ to get $FG \equiv H J^2 \pmod{Q}$.

If a rational point on $C$ is given, projection from it defines an isomorphism $C \to \PP^1$.
Find an equation $y^2=f(x)$ for $X$ as in the last paragraph of \eqref{Step:EvenGenus}.
\end{enumerate}

\section{Precision analysis}\label{S:Precision}

\begin{table}[h!]
\centering{
\scalebox{0.85}{
\begin{tabular}{c|ccll}
Object & Space & $\ord_P$ & Absolute error & Relative error \\ \hline
$\omega_j$ & $\HH^0(X,\Omega^1)$ & $[0,2g-2]$ & ${}+O(q^B) \, dq$ & $\cdot (1+O(q^{B-2g+2}))$ \\
$\omega_j'$ & $\HH^0(X_L,\Omega^1)$ & $[0,2g-2]$ & ${}+O(q^B) \, dq$ & $\cdot (1+O(q^{B-2g+2}))$ \\
$t$ & $L(C)$ & $[-2,2]$ & ${}+O(q^{B-2g})$ & $\cdot (1+O(q^{B-2g+2}))$ \\
$dt$ & $(\Omega^1_{C_L})_{\eta_{C_L}}$ & $[-3,1]$ & ${}+O(q^{B-2g-1}) \, dq$ & $\cdot(1+O(q^{B-2g-2}))$ \\
$t^i \frac{d}{dt}$ & $\HH^0(C_L,\calT)$ & $[-1,3]$ & ${}+O(q^{B-2g-3}) \, \frac{d}{dq}$ & $\cdot (1+O(q^{B-2g-2}))$ \\
$\del_i$ & $\HH^0(C,\calT)$ & $[-1,3]$  & ${}+O(q^{B-2g-3}) \, \frac{d}{dq}$ & $\cdot (1+O(q^{B-2g-6}))$ \\
$M(\del_0,\del_1,\del_2)$ & $\HH^0(C,\calT^{d})$ & $[-d,3d]$ & ${}+O(q^{B-2g-d-2}) \, \left(\frac{d}{dq}\right)^d$ & $\cdot (1+O(q^{B-2g-4d-2}))$ \\
$f$ & $\Q(C)$ & $[-4,4]$ & ${}+O(q^{B-2g-10})$ & $\cdot (1+O(q^{B-2g-6}))$ \\
$df$ & $(\Omega^1_C)_{\eta_C}$ & $[-5,5]$ & ${}+O(q^{B-2g-11})\,dq$ & $\cdot (1+O(q^{B-2g-16}))$ \\
$y$ & $\Q(X)$ & $[-2g-3,5]$ &${}+O(q^{B-4g-19})$ &$\cdot (1+O(q^{B-2g-16}))$ \\
$h$ &$\Q(C)$ & $[-4g-6,10]$ &${}+O(q^{B-6g-22})$ &$\cdot (1+O(q^{B-2g-16}))$ \\
$hG$ & $\calT^{g+3}_{\eta_{C}}$ & $[-5g-9,3g+19]$ & ${}+O(q^{B-11g-23}) \, \left(\frac{d}{dq}\right)^{g+3}$   & $\cdot (1 + O(q^{B-6g-14}))$\\
$F-hG$ & $\calT^{g+3}_{\eta_{C}}$ &  & ${}+O(q^{B-11g-23})\, \left(\frac{d}{dq}\right)^{g+3}$   & \\
$\omega_1 R_j$, $S\omega_j$ & $\HH^0(X,\Omega_X^1 \tensor \pi^\ast \calT^{\frac{g}{2}})$ & $[-g/2,7g/2-2]$ & ${}+O(q^{B-9g/2-2})\, \left(\frac{d}{dq}\right)^{\frac{g}{2}-1}$   & $\cdot (1+O(q^{B-4g-2}))$ \\
\end{tabular}
}
}
\caption{Tracking $q$-adic precision of objects in the proof of the main theorem.}
\label{table:error} 
\end{table}

In Section~\ref{S:proof of main theorem}, we assumed that $w_1,\ldots,w_g \in K[[q]]$ were given to infinite precision.
Now, in \Cref{table:error}, we track how much precision we have in the steps if we start only with $w_1,\ldots,w_g$ up to addition of $O(q^B)$.
For each Laurent series, we bound both absolute error (addition of $O(q^n)$ for some $n$) and relative error (multiplication by $1+O(q^n)$ for some $n$); we can pass between them if the valuation of a power series is controlled; these valuations lie in the range given in the $\ord_P$ column of \Cref{table:error}.
For series with coefficients in the \'etale algebra $L \tensor K$, the bounds apply when projected onto any field factor of $L \tensor K$.
Let $\eta_C$ be the generic point of $C$, so the stalk $(\Omega^1_C)_{\eta_C}$ is the space of meromorphic $1$-forms on $C$.
Define $\eta_{C_L}$ similarly.

\begin{lemma}\label{Lemma:Precision}
\Cref{table:error} is correct.
\end{lemma}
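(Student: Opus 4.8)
The plan is to verify \Cref{table:error} one row at a time, from top to bottom, since every object is constructed from objects appearing above it by a short repertoire of operations: exact $\Q$- or $K$-linear combinations (the change of basis in Step~\eqref{Step:ConvenientBasis}, the $(L\tensor K)/K$-traces producing the $\del_i$, and the evaluation of a homogeneous form), products and quotients of Laurent series, and the single differentiation $\frac{d}{dq}$ used to pass from $t$ to $dt$ and from $f$ to $df$. The whole argument rests on three elementary propagation rules together with the two conversions between absolute and relative error. First, an exact linear combination preserves absolute error: if each input is known up to $O(q^{N_i})$, then so is the output, with exponent $\min_i N_i$. Second, a product or quotient combines relative errors: if the factors are known up to multiplication by $1+O(q^{M_i})$, then the result is known up to $1+O(q^{\min_i M_i})$. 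Third, $\frac{d}{dq}$ lowers the absolute-error exponent by $1$ and lowers the leading order by $1$. The conversions are the obvious ones: a series $x=x_0+O(q^N)$ with $\ord_P(x_0)\le v_{\max}$ satisfies $x=x_0(1+O(q^{N-v_{\max}}))$, and a series $x=x_0(1+O(q^{M}))$ with $\ord_P(x_0)\ge v_{\min}$ satisfies $x=x_0+O(q^{v_{\min}+M})$. Thus the $\ord_P$ column supplies exactly the data needed to move between the two error columns: its upper end feeds the absolute-to-relative conversion, and its lower end feeds the relative-to-absolute conversion.

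With these rules the entries are forced. For example, $t=\omega_{g-1}'/\omega_g'$ is a quotient, so its relative exponent is the minimum of those of the two $1$-forms, namely $B-2g+2$; together with $\ord_P(t)\ge -2$ this gives the absolute exponent $B-2g$. Differentiating yields the $dt$ row with absolute exponent $B-2g-1$, and reconverting through $\ord_P(dt)\le 1$ gives its relative exponent $B-2g-2$; since $t^i\frac{d}{dt}=t^i\left(\frac{dt}{dq}\right)^{-1}\frac{d}{dq}$ is a product and quotient, it inherits the relative exponent $B-2g-2$, and the table continues downward in the same fashion. The $\ord_P$ ranges are justified geometrically: global sections of $\Omega^1$ vanish to order in $[0,2g-2]$, while the ranges for $t$, $dt$, the sections $t^i\frac{d}{dt}$, and their products in $\HH^0(C,\calT^{d})$ follow from the degrees of the relevant rational functions on $C$ and from \Cref{L:pullback of sections} applied to $\pi\colon X\to C$, with the ramification index at $P'$ supplying the correction term $n(e-1)$. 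Two bookkeeping remarks: the series lie in the \'etale algebra $(L\tensor K)[[q]]$ and every estimate is read off on each field factor separately, which is legitimate because traces and base change respect this decomposition; and the two differentiation steps lower the order by one at the relevant places, the stated ranges covering all Galois conjugates of $P$.

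The one genuinely delicate phenomenon, and the place where precision is actually lost, is \emph{cancellation in sums}. Whenever an object is produced by an exact linear combination---most importantly the $(L\tensor K)/K$-trace defining $\del_i$ and the evaluation of a general degree-$d$ form $M$ at $(\del_0,\del_1,\del_2)$---the naive ``minimum of the relative exponents'' rule is \emph{not} available, since the leading terms may cancel and inflate $\ord_P$ of the output. The correct route is to propagate the \emph{absolute} error, which is insensitive to cancellation, through the linear combination, and only then reconvert to relative error using the \emph{top} of the $\ord_P$ range. This is exactly why the $\del_i$ row has relative exponent $B-2g-6$ rather than the $B-2g-2$ one might naively expect from $t^i\frac{d}{dt}$: the trace preserves the absolute exponent $B-2g-3$ but forces reconversion through $\ord_P\le 3$. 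The same mechanism, applied to the degree-$d$ form $M$, produces the $B-2g-4d-2$ relative exponent in that row, the extra loss of $4d$ being precisely the top of the range $[-d,3d]$. Getting these cancellation-prone steps right---and, relatedly, proving the \emph{upper} bounds in the $\ord_P$ column tightly enough, since each unit there is subtracted directly from the relative precision---is the main obstacle; the remaining rows, including $f$, $df$, $y$, $h$, $hG$, $F-hG$, and the products $\omega_1 R_j$ and $S\omega_j$, are then routine applications of the three propagation rules and the two conversions.
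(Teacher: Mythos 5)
Your overall strategy --- row-by-row verification via propagation rules for absolute and relative error, with conversions between the two governed by the endpoints of the $\ord_P$ range --- is exactly the paper's, and you correctly isolate the key subtlety: sums (the traces defining the $\del_i$, and the evaluation of a homogeneous form) must be handled through absolute error because leading terms may cancel, with reconversion to relative error via the top of the $\ord_P$ range. The $\ord_P$ ranges themselves are justified as in the paper, via degrees on $C$ and \Cref{L:pullback of sections}.

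There is, however, a concrete gap in the $M(\del_0,\del_1,\del_2)$ row, and it propagates downward. Your three propagation rules do not suffice to obtain the table's absolute error $O(q^{B-2g-d-2})\left(\frac{d}{dq}\right)^d$ for a degree-$d$ form. A monomial $\del_{i_1}\cdots\del_{i_d}$ is a \emph{product}, and your only product rule is stated for relative errors: it gives relative error $1+O(q^{B-2g-6})$ (the minimum over the factors), which converts --- via $\ord_P \ge -d$ --- to absolute error only $O(q^{B-2g-d-6})$, four worse than the table. The missing tool is a product rule for \emph{absolute} errors: if one factor is known up to $+O(q^N)$ and the remaining $d-1$ factors have poles of order at most $1$, the product is known up to $+O(q^{N-(d-1)})$; applied with $N = B-2g-3$, the absolute error of a single $\del_i$, this yields the paper's $O(q^{B-2g-d-2})$. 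Without it, your bounds for $G(\del_0,\del_1,\del_2)$, hence for $hG$ and $F-hG$, each come out $4$ worse than the table (e.g.\ $B-11g-27$ instead of $B-11g-23$), which would force $B \ge 19g+52$ in the subsequent lemma rather than the stated $B = 19g+48$. Since the lemma asserts the table's exact entries, this step needs the sharper rule. Separately, a small internal slip: you describe the drop from the absolute exponent $B-2g-d-2$ to the relative exponent $B-2g-4d-2$ as an ``extra loss of $4d$'' equal to ``the top of the range $[-d,3d]$''; the top of that range is $3d$, and $3d$ is indeed the amount subtracted.
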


\begin{proof}
Each $\omega_j$ is regular and has $2g-2$ zeros in total, so $\ord_P(\omega_j) \in [0,2g-2]$.
It is given to absolute error $O(q^B) \, dq$.
The $\omega_j'$ are $L$-linear combinations of the $\omega_j$, so they have the same absolute error.
Since each $\omega_j$ and $\omega_j'$ vanishes at $P$ to order at most $2g-2$, their relative error is  $1+O(q^{B-2g+2})$ (as usual, big-$O$ notation allows for the possibility that the error could be smaller than specified).

Now $t = \omega_{g-1}' / \omega_g'$, so its relative error is again $1+O(q^{B-2g+2})$.
On the other hand, $t$ is the ``$x$-coordinate'' of a hyperelliptic model of $X$, so $\ord_P(t) \in [-2,2]$.
Since $\ord_P t \ge -2$, the absolute error of $t$ is $O(q^{B-2g})$.

The absolute error of $dt$ is then $O(q^{B-2g-1}) \, dq$.
Again since $t$ is the ``$x$-coordinate'' of a hyperelliptic model of $X$, we have $\ord_P(dt) \in [-3,1]$.
Since $\ord_P(dt) \le 1$, the relative error of $dt$ is $1 + O(q^{B-2g-2})$.

Fix $i \in \{0,1,2\}$.
The relative error of $t^i \frac{d}{dt}$ is the worse of the relative errors of $t$ and $dt$, which is $1 + O(q^{B-2g-2})$.
The section $t^i \frac{d}{dt}$ is regular on $C$, with $2$ zeros, so $\ord_{\pi(P)}(t^i \frac{d}{dt}) \in [0,2]$, 
so $\ord_P (t^i \frac{d}{dt} )$ is in $[0,2]$ or $-1 + 2 [0,2] = [-1,3]$ according to whether $\pi$ is unramified or ramified at $P$, by \Cref{L:pullback of sections} applied with $n=-1$.
Since $\ord_P (t^i \frac{d}{dt} ) \ge -1$, the absolute error of $t^i \frac{d}{dt}$ is $O(q^{B-2g-3}) \, \frac{d}{dq}$.

The $\del_i$ are linear combinations of the $t^i \frac{d}{dt}$, so they have the same absolute error $O(q^{B-2g-3}) \, \frac{d}{dq}$.
As for $t^i \frac{d}{dt}$, we have $\ord_P (\del_i) \in [-1,3]$.
Since $\ord_P(\del_i) \le 3$, the relative error of $\del_i$ is $1 + O(q^{B-2g-6})$.

We will need to analyze the error in $M(\del_0,\del_1,\del_2)$ for various nonzero forms $M \in \HH^0(\PP^2,\OO(d)) = \Q[a,b,c]_d$, for various $d \ge 1$, so we do a calculation for all of these at once, and later specialize to the particular $M$ we need.
Its order of vanishing at $\pi(P)$ is in $[0,2d]$, since $C$ is a curve of degree $2$ in $\PP^2$.
Then its order of vanishing at $P$ is in $[0,2d]$ or $-d + 2[0,2d] = [-d,3d]$, according to whether $\pi$ is unramified or ramified at $P$, by \Cref{L:pullback of sections} applied with $n=-d$.
If $M$ is a monomial, then the absolute error of $M(\del_0,\del_1,\del_2)$ is at worst that of one $\del_i$ minus $d-1$ (because all the $\del_j$ in the monomial have at worst a simple pole), hence at worst $O(q^{B-2g-d-2}) \left( \frac{d}{dq} \right)^d$.
Since $\ord_P(M(\del_0,\del_1,\del_2)) \le 3d$, the relative error is at worst $1+O(q^{B-2g-4d-2})$.

The rational function $f \colonequals a/b$ is of degree $2$ on $C$, and the ramification index of $\pi$ at $P$ is at most $2$, so $\ord_P(f) \in [-4,4]$.
Its relative error is the same as that of $a=\del_0$ and $b=\del_1$, which is $1 + O(q^{B-2g-6})$.
Since $\ord_P(f) \ge -4$, its absolute error is $O(q^{B-2g-10})$.

Then $\ord_P(df) \ge \ord_P(f) - 1 \ge -5$.
Since $f$ on $C$ has at most $2$ poles with multiplicity,
$df$ on $C$ has at most $4$ poles with multiplicity,
but the divisor of $df$ on $C$ has degree $-2$, so $df$ has at most $2$ zeros on $C$, so $\ord_P(df) \le 2 \cdot 2 + 1 = 5$, the worst case being if $\pi$ is ramified at $P$.
The absolute error of $df$ is $O(q^{B-2g-11})$, so the relative error is $1+O(q^{B-2g-16})$.

Since $\ord_P(df) \in [-5,5]$ and $\ord_P(\omega_1) \in [0,2g-2]$,
we have $\ord_P(y) = \ord_P(df/\omega_1) \in [-2g-3,5]$.
The relative error of $y$ is the worse of the relative errors of $df$ and $\omega_1$, which is $1+O(q^{B-2g-16})$.
Then the absolute error of $y$ is $O(q^{B-4g-19})$.

Squaring gives $\ord_P(h)\in 2[-2g-3,5] = [-4g-6,10]$, and $h$ has relative error $1+O(q^{B-2g-16})$ and absolute error $1+O(q^{B-6g-22})$.

For $hG$, we compute $\ord_P$ and the relative error from the corresponding numbers for $h$ and $M \colonequals G$ of degree $d=g+3$.
Since $\ord_P(hG) \ge -5g-9$, the absolute error is then $O(q^{(B-6g-14) + (-5g-9)})\, \left(\frac{d}{dq}\right)^{g+3} = O(q^{B-11g-23})\, \left(\frac{d}{dq}\right)^{g+3}$.
The absolute error for $F$, from the $M(\del_0,\del_1,\del_2)$ row with $d=g+3$, is $O(q^{B-2g-(g+3)-2})\, \left(\frac{d}{dq}\right)^{g+3}$.
Combining these gives $F-hG$ with absolute error $O(q^{B-11g-23})\, \left(\frac{d}{dq}\right)^{g+3}$.
(We do not need the $\ord_P$ and relative error of $F-hG$.)

The calculations for $\omega_1 R_j$ and $S \omega_j$ are analogous to those for $hG$.
\end{proof}

\begin{lemma}
If $B \ge 19g + 48$, then we can perform Steps~\ref{Step:ConvenientBasis}--\ref{Step:EvenGenus} in the proof of \Cref{theorem:recovering}.
$($Step \ref{Step:FindH} does not involve expansions; it is carried out exactly.$)$
\end{lemma}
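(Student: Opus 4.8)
The plan is to regard each of Steps~\eqref{Step:ConvenientBasis}--\eqref{Step:EvenGenus} as a single linear-algebra computation: the object to be extracted (the increasing-order basis in \eqref{Step:ConvenientBasis}, three $K$-linearly independent traces $\del_0,\del_1,\del_2$ in \eqref{Step:MakeGlobalSections}, the conic $Q$ in \eqref{Step:FindingQ}, the relation $F=hG$ in \eqref{Step:Writinghasratio}, and the space $\calR$ in \eqref{Step:EvenGenus}) is in each case the kernel of a $\Q$- or $K$-linear map $\phi$ from a finite-dimensional space of forms (or tuples of forms) into a space of $q$-expansions, where $\phi(v)$ is the expansion at $P$ of an associated section of a line bundle on $X$. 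We only ever evaluate $\phi$ through the truncated approximate expansions recorded in \Cref{table:error}, obtaining a map $\tilde\phi$, and we return $\ker\tilde\phi$. If $v\in\ker\phi$ is a genuine relation, then the associated section is identically $0$, so $\tilde\phi(v)$ is the accumulated error, which by \Cref{Lemma:Precision} lies below the absolute-error floor of the relevant row; hence $\tilde\phi(v)$ truncates to $0$ and $\ker\phi\subseteq\ker\tilde\phi$ automatically. The content of the lemma is the reverse inclusion: no spurious relations.

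So the plan is, for each step, to compare two numbers: the absolute-error exponent $B'$ of the row of \Cref{table:error} that governs $\phi(v)$, and the largest possible order $N$ of vanishing at $P$ of a \emph{nonzero} image $\phi(v)$. If $v\notin\ker\phi$, then $\phi(v)$ is a nonzero section whose leading term sits at some order $\le N$; as soon as $B'>N$ that term lies above the error floor and is computed correctly, so $\tilde\phi(v)\ne 0$ and $v\notin\ker\tilde\phi$. Thus $\ker\tilde\phi=\ker\phi$ whenever $B'>N$. The quantity $N$ is bounded by counting zeros: a nonzero meromorphic section of a line bundle $\calL$ on $X$ has at most $\deg\calL$ plus the number of its poles zeros, and orders at $P$ are transported between $C$ and $X$ by \Cref{L:pullback of sections}. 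For Steps~\eqref{Step:ConvenientBasis}--\eqref{Step:SquareRoot} these inequalities are mild — for instance the conic of \eqref{Step:FindingQ} has $N=6$ against $B'=B-2g-4$, needing only $B>2g+10$ — and the divisions forming $f$, $df$, $y$, and $h$ in \eqref{Step:SquareRoot} are carriable because each denominator's order (tracked in \Cref{table:error}) is far below its precision floor. Likewise the system for $\calR$ in \eqref{Step:EvenGenus} has $N\le 4g-2$ against $B'=B-9g/2-2$, needing only $B>17g/2$.

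The binding step is \eqref{Step:Writinghasratio}, where we solve $F=hG$ with $F,G\in\Q[a,b,c]_{g+3}$ and $\phi(F,G)$ is the expansion of $F(\del_0,\del_1,\del_2)-h\,G(\del_0,\del_1,\del_2)$. By the $hG$ and $F-hG$ rows of \Cref{table:error}, its absolute-error exponent is $B'=B-11g-23$. To bound $N$ I view $\pi^*(F-hG)$ as a nonzero meromorphic section of $(\Omega^1_X)^{\otimes(-(g+3))}$ on $X$, of degree $-(g+3)(2g-2)$; its poles come from the at-most-simple poles of the $\del_i$ at the $2g+2$ ramification points of $\pi$ (contributing at most $(g+3)(2g+2)$) together with the at-most-$4g+12$ poles of $h$ on $X$. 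Hence its number of zeros, and so $\ord_P(F-hG)$, is at most $-(g+3)(2g-2)+(g+3)(2g+2)+(4g+12)=8g+24$. Detection therefore requires $B-11g-23>8g+24$, i.e.\ $B\ge 19g+48$, exactly the hypothesis.

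The main obstacle is getting this last bound right: everything hinges on the pole count for $F-hG$ on $X$, which rests on the facts established in \eqref{Step:Writinghasratio} and in \Cref{Lemma:Precision} — that $h$ has degree at most $2g+6$ on $C$ (hence at most $4g+12$ poles on $X$) and that the $\del_i$ acquire only simple poles under pullback. A looser count, or a mistake in the ramification contribution of \Cref{L:pullback of sections}, would change the constant and either break correctness or needlessly inflate the threshold; once these counts are pinned down, the remaining verifications are the routine exponent bookkeeping of \Cref{table:error} matched against the zero counts above.
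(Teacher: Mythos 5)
Your proposal is correct and takes essentially the same approach as the paper: compare the absolute-error exponents of \Cref{table:error} against the maximal possible order of vanishing at $P$ of a nonzero section in each linear system, with the binding constraint coming from $F-hG$ in Step~\eqref{Step:Writinghasratio}. Your degree count for $F-hG$ is carried out directly on $X$ (via the ramification poles of the $\del_i$ and the poles of $h$) rather than on $C$ and then pulled back as in the paper, but the two computations are equivalent and both yield the bound $8g+24$, hence the same threshold $B \ge 19g+48$.
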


\begin{proof}
In Step~\ref{Step:FindingQ}, $Q$ is determined by $Q(\del_0,\del_1,\del_2) \bmod q^7$ because when $M \in \HH^0(\PP^2,\OO(2))$, we have $\ord_P M(\del_0,\del_1,\del_2) \le 6$.
We computed $Q(\del_0,\del_1,\del_2)$ to absolute error $O(q^{B-2g-d-2}) \left(\frac{d}{dq} \right)^d$ with $d=2$, which is good enough since $B-2g-2-2 \ge 7$.

Let $(h)_\infty$ be the polar part of the divisor of $h$, which has degree at most $2g+6$ as explained in Step~\ref{Step:Writinghasratio}.
For any $F,G \in \Q[a,b,c]_{g+3}$, the expression $F-hG$ is a global section of $\calT^{g+3} \tensor \OO_C((h)_\infty)$, which is a line bundle of degree at most $2(g+3) + (2g+6) = 4g+12$.
The pullback of this bundle to $X$ has degree at most $2(4g+12) = 8g+24$.
Thus, if $\ord_P(F-hG) > 8g+24$, then $F-hG=0$.
In other words, it suffices to do the linear algebra in Step~\ref{Step:Writinghasratio} to absolute precision $+O(q^{8g+25})\left(\frac{d}{dq}\right)^{g+3}$.
By \Cref{table:error}, we have this precision if $B-11g-23 \ge 8g+25$, or, equivalently, $B \ge 19g + 48$.

The degree of $\Omega^1_X \tensor \pi^* \calT^{g/2}$ on $X$ is $2g-2 + 2 \cdot 2 (g/2) = 4g-2$,
so if $\ord_P(\omega_1 R_j - S \omega_j) > 4g-2$, then $\omega_1 R_j - S \omega_j = 0$.
In other words, it suffices to do the linear algebra in Step~\ref{Step:EvenGenus} to absolute precision $+O(q^{4g-1})\left(\frac{d}{dq}\right)^{g/2-1}$.
By \Cref{table:error}, we have this precision if $B-9g/2-2 \ge 4g-1$, or, equivalently, $B \ge 17g/2 + 1$.
\end{proof}

\begin{remark}
In each of the models found, we have the expansions at $P$ of the new coordinate functions, as Laurent series in $q$, so we can find the coordinates of $P$ in the new model.
Similarly, by linear algebra we can express $\omega_1,\ldots,\omega_g$ in terms of the new coordinates, if desired. 
\end{remark}

\begin{remark}
The genus of a modular curve with geometric gonality $2$ is at most $17$ \cite[Remark~4.5]{baker-gonzalez-jimenez-gonzalez-poonen-05}.
So, in running the algorithm of \Cref{theorem:recovering} on hyperelliptic modular curves, we always have $g \le 17$.
\end{remark}

\section*{Acknowledgments} 
We thank David Roe for his comments on early versions of the algorithm of this article.
His computations using our algorithm were essential in helping us debug and improve it.

Van Bommel, Costa, and Poonen were supported by Simons Foundation grant 550033.
Van Bommel was additionally supported by C\'eline Maistret’s Royal Society Dorothy Hodgkin Fellowship.
Costa was additionally supported by Simons Foundation grant SFI-MPS-Infrastructure-00008651.
Poonen was partially supported also by National Science Foundation grant DMS-2101040 and Simons Foundation grant 402472.
Srinivasan was supported by National Science Foundation grant DMS-2401547 and Simons Foundation grant 546235.

\printbibliography

\end{document}